\newtheorem{thm}{Theorem}
\newtheorem*{thm*}{Theorem}
\newtheorem{lemma}[thm]{Lemma}
\newtheorem{proposition}[thm]{Proposition}
\newtheorem{conjecture}[thm]{Conjecture}
\newtheorem{step}{Step}
\theoremstyle{definition}
\newtheorem{remark}[thm]{Remark}
\newcommand{\ph}{\varphi}
\newcommand{\w}{\widetilde}
\newcommand{\pr}{\mathbb{P}}
\newcommand{\Q}{\mathbb{Q}}
\newcommand{\R}{\mathbb{R}}
\newcommand{\N}{\mathcal{N}_1}
\newcommand{\Nu}{\mathcal{N}^1}
\newcommand{\NE}{\operatorname{NE}}
\newcommand{\Exc}{\operatorname{Exc}}
\newcommand{\Lo}{\operatorname{Locus}}
\newcommand{\codim}{\operatorname{codim}}
\newcommand{\dom}{\operatorname{dom}}
\newlength{\Mheight}
\newlength{\cwidth}
\title{Numerical invariants of Fano 4-folds}
\author{Cinzia Casagrande}
\date{January 25, 2012}
\begin{document}
\maketitle
\noindent Let $X$ be a (smooth, complex) Fano $4$-fold. As usual, we denote by $\N(X)$ 
the vector space of one-cycles in $X$, with real coefficients, modulo numerical equivalence; its dimension is the Picard number $\rho_X$ of $X$, which coincides with the second Betti number.

For any prime divisor $D\subset X$, let $\N(D,X)$ be the linear subspace of $\N(X)$ generated by classes of curves contained in $D$;  its dimension is at most $\rho_D$. We consider the following invariant of $X$:
$$c_X:=\max\left\{\codim\N(D,X)\,|\,D\text{ a prime divisor in }X\right\}.$$
Notice that $c_X\in\left\{0,\dotsc,\rho_X-1\right\}$, and $c_X\geq \rho_X-\rho_D$ for any prime divisor $D\subset X$. This invariant has been introduced in \cite{codim} for Fano manifolds of arbitrary dimension; it turns out that $c_X$ is always at most $8$ \cite[Th.~3.3]{codim}. 
Here we  consider the case where $X$ has dimension $4$.

If $X=S_1\times S_2$ is a product of Del Pezzo surfaces with \label{product} $\rho_{S_1}\geq\rho_{S_2}$,  then
 $\rho_{S_1}=c_X+1$, 
$\rho_{S_2}\leq c_X+1$, and $\rho_X=\rho_{S_1}+\rho_{S_2}\leq 2c_X+2$ (see \cite[Ex.~3.1]{codim}).
In particular, as Del Pezzo surfaces have Picard number at most $9$, we get 
$c_X\leq 8$ and $\rho_X\leq
18$. 

When $X$ is not a product of surfaces, we have the following.
\begin{thm}[\cite{codim}, Th.~1.1 and Cor.~1.3]\label{tre}
Let $X$ be a Fano $4$-fold which is not a product of surfaces. Then
$c_X\leq 3$.

Moreover if $c_X=3$, then $\rho_X\in\{5,6\}$ and $X$ has a flat fibration onto $\pr^2$, $\pr^1\times\pr^1$, or the Hirzebruch surface $\mathbb{F}_1$. 
\end{thm}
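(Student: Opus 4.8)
\emph{Proof proposal.} Fix a prime divisor $D\subset X$ realizing the invariant, so $\codim\N(D,X)=c_X=:c$; we may assume $c\ge 2$, and by the general bound $c\le 8$ of \cite[Th.~3.3]{codim} we start from a finite list of possibilities. The plan is to extract from $D$ a fibre type contraction of $X$ onto a surface and then to run the classification of low-dimensional extremal contractions on both the fibration and the divisor.

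\textbf{Producing a fibration.} Since $X$ is Fano, $\overline{\NE}(X)$ is rational polyhedral and spans $\N(X)$, so there are extremal rays $R_1,\dots,R_c$ whose classes are independent modulo $\N(D,X)$. The elementary but crucial remark is that an extremal ray $R\not\subseteq\N(D,X)$ is contracted by a morphism contracting no curve lying in $D$; hence $R$ is not the ray of a divisorial contraction of $D$, and if its contraction is of fibre type then $D$ dominates the target. Assembling these rays (or a suitable face of $\overline{\NE}(X)$ containing them) and using $\dim X=4$, I want to obtain a fibre type contraction $\pi\colon X\to S$ onto a normal surface $S$, with $D$ dominating $S$, general fibre $F$ a smooth Del Pezzo surface (by adjunction $-K_X|_F=-K_F$), and $c\le\rho_X-\rho_S=\dim\ker\pi_*$. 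The remaining base dimensions have to be excluded in an extremal configuration: a point is vacuous; a threefold base makes the relevant contracted face essentially a ray, so $c\le1$ unless $D$ is vertical, which forces $\codim\N(D,X)\le1$; a curve base is reduced to the surface case by restricting to a general fibre, a Fano $3$-fold containing the prime divisor $D\cap F$ of high relative codimension, whence the $3$-dimensional analogue applies. I expect this reduction to be the first genuine difficulty.

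\textbf{Bounding $c$.} With $\pi$ in hand, $\ker\pi_*$ is spanned by classes of vertical curves; since $X$ is Fano its singular fibres are mild, so $\dim\ker\pi_*$ is controlled by $\rho_F\le 9$, and, with $\rho_S$ also bounded ($S$ being close to Del Pezzo), one gets an a priori estimate on $c$. To improve this to $c\le3$ one invokes the product dichotomy recalled in the excerpt: if the relative structure of $\pi$ is trivial enough — a section splitting off $S$, or $D\cap F$ numerically decoupling from the vertical directions — then $X\cong S_1\times S_2$, excluded by hypothesis; eliminating this constrains $\rho_S\le2$, hence $S\in\{\pr^2,\pr^1\times\pr^1,\mathbb{F}_1\}$, makes $\pi$ flat, and gives $c\le3$. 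The analysis here should revolve around the conic-bundle-like map $D\to S$ and the restrictions of $-K_X$ to fibres and to multisections.

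\textbf{The case $c=3$.} Here $\N(D,X)$ has codimension exactly $3$, and one must show $\rho_X\in\{5,6\}$. From $3\le\rho_X-\rho_S$ and $\rho_S\ge1$ one already gets $\rho_X\ge4$; ruling out $\rho_X=4$ and $\rho_X\ge7$ is, I expect, the \textbf{main obstacle}, requiring the full list of extremal contractions of Fano $4$-folds, a case-by-case study of how $D$ behaves under each of them and under $\pi$, and the classification of surface fibrations with relative Picard number $3$ or $4$. The exclusion $\rho_X\ne4$ in particular should come from the observation that a prime $3$-fold $D$ with $\dim\N(D,X)=1$ has all its curves numerically proportional in $X$, which is extremely restrictive once the surface fibration $\pi$ is present.
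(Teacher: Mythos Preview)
This theorem is not proved in the present paper: it is quoted from \cite{codim} (Th.~1.1 and Cor.~1.3 there), and the paper only uses it as background before turning to its own result on the case $c_X=2$. So there is no proof here to compare your proposal against.

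That said, a word on your outline versus the actual argument in \cite{codim}. Your strategy of picking extremal rays $R_1,\dots,R_c$ independent modulo $\N(D,X)$ and ``assembling'' them into a face whose contraction is of fibre type does not work as stated: such rays need not span a face of $\NE(X)$, and there is no reason the associated contractions fit together into a single morphism. The mechanism in \cite{codim} is different and more robust: one runs a \emph{special Mori program} for the divisor $-D$ (a Mori program in which every extremal ray used is $K$-negative, cf.\ the proof of Lemma~\ref{due} in the present paper), producing a sequence of divisorial contractions and flips ending in a fibre-type contraction. The exceptional divisors arising along the way are smooth $\pr^1$-bundles with $E\cdot f=-1$, and counting them against $\codim\N(D,X)$ is what yields the bound and the structural statement. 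Your intuition that one eventually lands on a surface fibration with $\rho_S\le 2$ is correct, but the route there is via this MMP construction rather than by directly exhibiting a face of the Mori cone.
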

In this paper we consider the case $c_X=2$, in which we give the following bound on the Picard number.
\begin{thm}\label{main}
Let $X$ be a Fano $4$-fold with $c_X=2$. Then $\rho_X\leq 12$.

Moreover if $\rho_X\geq 7$, then there is a diagram:
$$X\longrightarrow X_1\stackrel{h}{\dasharrow}\w{X}_1\longrightarrow Y$$
where all the varieties are smooth and projective,  $X\to X_1$ is the
blow-up of a smooth irreducible surface contained in $\dom(h)$, $X_1$ is Fano,
 $h$ is birational and an isomorphism in
codimension $1$, and
 $\w{X}_1\to Y$ is an elementary contraction and a 
conic bundle.
\end{thm}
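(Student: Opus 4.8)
The plan is to construct the diagram by combining the structure theory of \cite{codim} with the minimal model program, and then to read off $\rho_X\le 12$ from the identity $\rho_X=\rho_Y+2$ that the diagram forces (the blow-up drops $\rho$ by one, $h$ preserves it, and $\w{X}_1\to Y$ is elementary). Fix a prime divisor $D$ with $\codim\N(D,X)=2$. From \cite{codim} the class of $D$ is not movable, so $\NE(X)$ has an extremal ray $R_0$ with $D\cdot R_0<0$; then $\Lo(R_0)\subseteq D$, so the contraction of $R_0$ is birational---either divisorial with exceptional divisor $D$, or small. The first and main task is to prove that, under $\rho_X\ge 7$, $X$ admits a divisorial elementary contraction $\sigma\colon X\to X_1$ which is the blow-up of a smooth irreducible surface $S\subseteq X_1$, with $X_1$ Fano and $\codim_{X_1}\N(S,X_1)=2$.

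To obtain this I would analyse the extremal rays of $X$ via the classification of elementary contractions of smooth Fano $4$-folds (Wi\'{s}niewski; Andreatta--Wi\'{s}niewski; Kawamata). A contraction sending a divisor onto a point or a curve forces $\rho_X\le 4$ and so does not occur here; the ``bad'' divisorial contractions (exceptional divisor onto a singular surface, or jumping fibre dimension) and the small contractions negative on $D$ are excluded, or reduced to the good case, by estimates of the same flavour combined with $\codim\N(D,X)=2$. What survives is that the contraction attached to $R_0$ (possibly after controlling one flip and staying in the Fano-type category) is the blow-up of a smooth irreducible surface $S$, whence $\codim_{X_1}\N(S,X_1)=\codim\N(D,X)=2$. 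It then remains to check that $X_1$ is Fano: $-K_{X_1}=\sigma_*(-K_X)$ is automatically nef and big, and $-K_{X_1}\cdot C>0$ can fail only for $C\subseteq S$, where $-K_{X_1}\cdot C=-K_X\cdot\gamma+\Exc(\sigma)\cdot\gamma$ for a section lift $\gamma$ of $C$ into $\Exc(\sigma)$, the (possibly negative) second term being governed by the positivity of the normal bundle of $S$ in $X_1$; the codimension-two condition is exactly what bounds it. Extracting one divisorial contraction of this good kind, with Fano target, out of the combinatorics of the large Mori cone is where I expect the main difficulty to lie.

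With $X_1$ Fano of Picard number $\rho_X-1\ge 6$ and $S\subset X_1$ smooth with $\codim_{X_1}\N(S,X_1)=2$, I would then run the minimal model program on $X_1$ ``towards'' the two missing curve classes. Because curves in $S$ span a codimension-two subspace of $\N(X_1)$, $S$ stays disjoint from the (codimension $\ge 2$) loci of the flipping and flopping rays contracted along the way, so after finitely many such small modifications one reaches $h\colon X_1\dasharrow\w{X}_1$---an isomorphism in codimension $1$ and an isomorphism near $S$, hence $S\subseteq\dom(h)$---with $\w{X}_1$ carrying a fibre-type elementary contraction onto a variety $Y$. Since this contraction is elementary, $\rho_Y=\rho_{\w{X}_1}-1=\rho_X-2\ge 5$, so $\dim Y\in\{2,3\}$. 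If $\dim Y=2$ then every curve contained in a fibre is contracted, so the curves in a general fibre span only a $1$-dimensional subspace of $\N(\w{X}_1)=\N(X_1)$; carrying this family back to $X$, a prime divisor of $X$ swept out by a pencil of these subvarieties would then have span of dimension $\le 3$, i.e.\ codimension $\ge\rho_X-3\ge 4>2=c_X$, a contradiction. Hence $\dim Y=3$, the contraction $\w{X}_1\to Y$ is a conic bundle, and smoothness of $\w{X}_1$ forces smoothness of $Y$ (Ando).

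It remains to bound $\rho_X$. For $\rho_X\le 6$ the inequality $\rho_X\le 12$ is trivial, so assume $\rho_X\ge 7$, use the diagram and $\rho_X=\rho_Y+2$, and prove $\rho_Y\le 10$. The $3$-fold $Y$ is smooth and rationally connected (being the image of the rationally connected $\w{X}_1$), and I would transport the hypothesis to it: given a prime divisor $B\subset Y$, its pullback to $\w{X}_1$, then its transform through $h^{-1}$ to $X_1$, and then the corresponding divisor $B'$ of $X$, satisfy, by the standard comparison of the spaces $\N(\cdot)$ along $\w{X}_1\to Y$, $h$, and $X\to X_1$, the inequality $\codim_Y\N(B,Y)\le\codim_X\N(B',X)\le c_X=2$; so $c_Y\le 2$. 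Moreover the image of $\w{S}:=h(S)$ in $Y$ must be a prime divisor $B_S$ of $Y$---since $\w{S}$ cannot map onto a curve without $\N(\w{S},\w{X}_1)$ becoming too small---with $\codim_Y\N(B_S,Y)\le 2$. The classification of rationally connected $3$-folds with $c_Y\le 2$ carrying such a divisor then gives $\rho_Y\le 10$: either $Y$ is a product of a Del Pezzo surface with $\pr^1$, in which case $\rho_Y\le 9+1=10$, or $\rho_Y$ is bounded by a still smaller number, the estimate ultimately resting on $\rho\le 9$ for Del Pezzo surfaces. Hence $\rho_X\le 12$.
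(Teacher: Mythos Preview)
Your proposal takes a route that is structurally different from the paper's, and several of its load-bearing steps are not justified.

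The paper does \emph{not} construct the diagram directly. It imports from \cite[Prop.~5.1]{eff} a dichotomy: for $\rho_X\ge 6$ and $c_X=2$, either the diagram (with the bound $\rho_X\le 12$) already exists, or $X$ is the blow-up of a Fano $4$-fold $X_2$ along two disjoint smooth surfaces. The entire content of the paper's proof is to show that, when $\rho_X\ge 7$, the second alternative is impossible; this is done by a careful combinatorial analysis of extremal rays (Lemmas~\ref{vari}--\ref{giusto} and Steps~1--3). Your outline bypasses this dichotomy and tries to build the diagram from scratch, but the key claims---that the surface $S$ ``stays disjoint from the flipping loci'' because $\codim\N(S,X_1)=2$, that the resulting $X_1$ is Fano, that one can rule out $\dim Y=2$ by producing a divisor of large codimension---are asserted rather than proved, and none of them follows from the codimension-two hypothesis in the way you suggest. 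In fact the paper's argument shows precisely that the construction can branch into case~(ii), and substantial work is needed to exclude it.

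The second gap is the bound $\rho_Y\le 10$. You appeal to a ``classification of rationally connected $3$-folds with $c_Y\le 2$'' yielding $\rho_Y\le 10$; no such result is available. The threefold $Y$ is smooth and rationally connected but not Fano in general, so neither the Mori--Mukai classification nor the product-of-surfaces picture applies. In the paper the bound $\rho_X\le 12$ is obtained instead from \cite[Th.~1.1]{eff}, which gives $\rho_{X_1}\le 11$ directly for a Fano $4$-fold admitting a rational conic-bundle structure of this type; this is a separate, nontrivial input that your argument would also need.
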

The author does not know whether the bound $\rho_X\leq 12$ given above is sharp. Indeed, if $X$ is a product of surfaces with $c_X=2$, then $\rho_X\leq 2c_X+2=6$. On the other hand, all known examples of Fano $4$-folds which are not products of surfaces have Picard number at most $6$; it would be interesting to have examples with larger Picard number.

The main motivation for this work is the following conjecture, which remains open in the case $c_X\leq 1$.
\begin{conjecture}
Let $X$ be a Fano $4$-fold. Then $\rho_X\leq 18$, with equality only if $X$ is a product of surfaces.
\end{conjecture}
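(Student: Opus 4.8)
The plan is to prove the conjecture by first separating the case in which $X$ is a product of surfaces from the case in which it is not, and then, in the non-product case, stratifying by the value of $c_X$ and invoking the results recalled above. The key observation organizing the whole argument is that, thanks to Theorems~\ref{tre} and~\ref{main}, the bound $\rho_X\leq 18$ is already known except when $X$ is a non-product with $c_X\leq 1$, so the entire unresolved content of the conjecture is concentrated in that one case.

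Suppose first that $X=S_1\times S_2$ is a product of Del Pezzo surfaces with $\rho_{S_1}\geq\rho_{S_2}$. By the product estimate recalled above, $\rho_X=\rho_{S_1}+\rho_{S_2}\leq 2c_X+2\leq 18$, using $c_X\leq 8$. Equality $\rho_X=18$ forces $c_X=8$ and $\rho_{S_1}=\rho_{S_2}=9$, so both factors are Del Pezzo surfaces of degree one (blow-ups of $\pr^2$ at eight general points), and such products do occur; thus the value $18$ is attained, and attained on products. This settles the conjecture, equality clause included, whenever $X$ is a product. Now suppose $X$ is \emph{not} a product of surfaces. By Theorem~\ref{tre}, $c_X\leq 3$, and if $c_X=3$ then $\rho_X\leq 6$; by Theorem~\ref{main}, if $c_X=2$ then $\rho_X\leq 12$. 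In either case $\rho_X<18$. Hence it remains only to treat non-products with $c_X\leq 1$, and for these it suffices to establish $\rho_X\leq 17$: this simultaneously secures the bound and shows that equality cannot occur off the product locus, which is exactly the equality clause.

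For the remaining case $c_X\leq 1$ I would argue through the cone of curves. When $c_X\leq 1$, every prime divisor $D\subset X$ satisfies $\codim\N(D,X)\leq 1$, i.e.\ the curves contained in $D$ span a subspace of $\N(X)$ of codimension at most one. This Lefschetz-type non-degeneracy is opposite to the situation exploited for $c_X\geq 2$: there one finds a prime divisor whose curve classes span a small subspace and, after a controlled sequence of blow-ups, flips, and flops preserving $c_X$, reaches a fibration (the conic bundle of Theorem~\ref{main}) whose base and fibres bound $\rho_X$. Here no degenerate divisor is available, so the plan is instead to run a relative Mori program and classify the elementary contractions $\varphi\colon X\to Y$ directly. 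For a fibre-type contraction one would use the non-degeneracy to bound $\rho_X$ in terms of $\rho_Y$ and the (Del Pezzo or Fano) fibres; for a birational contraction one would analyze the exceptional divisor $E$, whose class $\N(E,X)$ is again forced to be non-degenerate, both to bound how $\rho$ can drop and to propagate the condition $c\leq 1$ along the program. The goal is to show that the number of extremal rays of $\NE(X)$, and hence $\rho_X$, is bounded well below $18$, in line with the fact that all known non-product examples have $\rho_X\leq 6$.

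The hard part, and the reason the conjecture remains open for $c_X\leq 1$, is precisely this last step. The small value of $c_X$ removes the geometric leverage—a divisor localizing curve classes—on which the inductive fibration arguments for $c_X\geq 2$ rely, and it singles out no distinguished contraction to induct on. I expect the genuine obstacle to be the subcase $c_X=1$ in the presence of many small contractions, where the flips make the birational geometry hardest to control and where one seems to need a new global invariant—perhaps a direct bound on the number of extremal rays, or on the anticanonical degree $(-K_X)^4$—to close the argument. Sharpening the target for non-products from $17$ down to the conjecturally correct value near $6$ would then be a further, and presumably still harder, refinement.
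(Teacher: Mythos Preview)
The statement you are trying to prove is a \emph{conjecture}, and the paper does not prove it; immediately after stating it the paper says explicitly that it ``remains open in the case $c_X\leq 1$.'' So there is no proof in the paper to compare against.

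Your proposal is not a proof either, and you are aware of this. The reduction you carry out in the first two paragraphs is correct and matches exactly what the paper already records: products give $\rho_X\leq 18$ with equality attained, and for non-products Theorems~\ref{tre} and~\ref{main} handle $c_X=3$ and $c_X=2$, leaving only $c_X\leq 1$. But your third paragraph is a wish list, not an argument: you propose to ``run a relative Mori program and classify the elementary contractions,'' to ``bound $\rho_X$ in terms of $\rho_Y$ and the fibres,'' and to ``propagate the condition $c\leq 1$,'' without supplying any of the mechanisms that would make these steps work. In the fourth paragraph you explicitly concede that ``the conjecture remains open for $c_X\leq 1$'' and that a ``new global invariant'' seems to be needed. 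That is an honest assessment of the state of the art, but it means the proposal has a genuine gap exactly where the mathematical content would have to be: no bound on $\rho_X$ is obtained when $c_X\leq 1$, so the conjecture is not established.
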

 Let us recall that after boundedness, 
the Picard number of Fano manifolds has a maximal value in each dimension. As noticed by J.~Koll\'ar
 \cite[Rem.~1.6]{kollareff},   an explicit bound on $\rho_X$ can be obtained with
the same techniques used to prove boundedness. Indeed
there are explicit bounds for the  Betti numbers of a smooth projective variety, embedded in a projective space, in terms of the dimension and of the degree, as shown recently by
  F.~L.~Zak \cite{zakbounds}.
However such a bound on $\rho_X$ is (conjecturally) far from being sharp: for instance,
 in dimension $4$ we get $\rho_X<3^85^{304}23^426$. We refer the reader to  Rem.~\ref{explicit} for more details.

\medskip

The proof of Theorem \ref{main} relies on both \cite{codim,eff}. 
In particular, we use the following result.
\begin{proposition}[\cite{eff}, Prop.~5.1]\label{51}
Let $X$ be a Fano $4$-fold with $\rho_X\geq 6$ and $c_X=2$.
Then one of the following holds:
\begin{enumerate}[$(i)$]
\item $\rho_X\leq 12$, and there is a diagram
$X\longrightarrow X_1\stackrel{h}{\dasharrow}\w{X}_1\longrightarrow Y$ as in Th.~\ref{main};
 \item there exists a Fano $4$-fold $X_2$ and $\sigma\colon X\to X_2$ a blow-up of
  two disjoint smooth irreducible surfaces.
\end{enumerate}
\end{proposition}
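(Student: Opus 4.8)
The plan is to run the minimal model program on $X$ and to use the hypothesis $c_X=2$ to control which elementary contractions can occur; the whole dichotomy will come from analyzing the divisorial rays of $X$. The crucial first reduction is that, when $\rho_X\geq 6$, every \emph{divisorial} elementary contraction of $X$ must be the blow-up of a smooth $4$-fold along a smooth irreducible surface. Indeed, let $\varphi\colon X\to X'$ be an elementary contraction of a ray $R$ whose exceptional locus is a prime divisor $E$. If $\varphi(E)$ were a point, every curve in $E$ would be contracted, hence numerically proportional to $R$, so $\N(E,X)=\R\,[R]$ and $\codim\N(E,X)=\rho_X-1\geq 5$, contradicting $c_X=2$; if $\varphi(E)$ were a curve $B$, then the image of $\N(E,X)$ in $\N(X')$ would lie in $\R\,[B]$ while its kernel is $\R\,[R]$, so $\dim\N(E,X)\leq 2$ and $\codim\N(E,X)=\rho_X-2\geq 4$, again impossible. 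Hence $\varphi(E)$ is a surface, and by the classification of divisorial contractions of smooth Fano $4$-folds together with the Fano condition one identifies $\varphi$ with the blow-up of a smooth irreducible surface in a smooth $4$-fold $X'$. The same numerical bookkeeping with $\N(\cdot,X)$ will be used repeatedly to rule out unwanted contractions later in the argument.

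With this in hand I would split into two cases according to the configuration of the divisorial rays of $X$. Suppose first that $X$ carries two divisorial elementary contractions whose exceptional divisors $E_1,E_2$ are disjoint. Because $E_1\cap E_2=\emptyset$, the two contractions commute and factor through a single morphism $\sigma\colon X\to X_2$ which is simultaneously the blow-up of the two disjoint smooth surfaces $\sigma(E_1),\sigma(E_2)$; here $X_2$ is smooth because each step blows down a smooth surface in a smooth $4$-fold, and one checks that $X_2$ is again Fano using that $X$ is Fano and the $E_i$ are $K$-negative extremal divisors. This places us in alternative $(ii)$.

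If instead no such pair of disjoint divisorial contractions exists, I would aim for alternative $(i)$. Choose one divisorial elementary contraction $X\to X_1$; by the reduction above it is the blow-up of a smooth surface and $X_1$ is Fano. On $X_1$ I would run a relative MMP whose goal is a Mori fiber space, and the content of the argument is to show that, under the present hypotheses, this can be achieved through a birational map $h\colon X_1\dashrightarrow\w{X}_1$ that is an isomorphism in codimension $1$ (a composition of flips, with no intermediate divisorial contraction), landing on a smooth $\w{X}_1$ that carries an elementary contraction $\w{X}_1\to Y$ which is a conic bundle over a smooth $3$-fold $Y$. It is precisely the exclusion of the Case $(ii)$ configuration, together with the constraint $c_X=2$, that forbids divisorial steps along $h$ and that forces the final fiber space to be an elementary conic bundle rather than a higher-dimensional fibration. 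The numerical bound then follows by reading off Picard numbers along the diagram: the blow-up gives $\rho_X=\rho_{X_1}+1$, the isomorphism in codimension $1$ gives $\rho_{X_1}=\rho_{\w{X}_1}$, and the elementary conic bundle gives $\rho_{\w{X}_1}=\rho_Y+1$, so that $\rho_X=\rho_Y+2$; bounding $\rho_Y\leq 10$ as for Fano $3$-folds yields $\rho_X\leq 12$.

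I expect the main obstacle to be the birational step in alternative $(i)$: one must steer the MMP on $X_1$ so that $h$ introduces no new divisorial contraction and terminates at a conic bundle, all while preserving smoothness at each stage. Keeping track of the geometry of $4$-fold flips, verifying that the extremal rays appearing along the way are of fiber type or small but never divisorial, and confirming that the resulting fiber space is an elementary conic bundle, is the technical heart of the proof, and it is where the detailed structure theory for Fano $4$-folds with $c_X=2$ developed in \cite{eff} is indispensable. A secondary difficulty is the bound on $\rho_Y$: to obtain exactly $\rho_X\leq 12$ one must transport the information carried by $c_X=2$ to numerical data of the conic bundle $\w{X}_1\to Y$ and its discriminant, and then invoke the low-dimensional bound on the base.
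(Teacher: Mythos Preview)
This proposition is not proved in the paper; it is quoted from \cite{eff}. The paper does, however, describe the proof (see the paragraph after Prop.~\ref{51} and the proof of Lemma~\ref{due}): one fixes a prime divisor $D$ with $\codim\N(D,X)=2$, produces via \cite[Prop.~2.5]{codim} an extremal ray $R_1$ of type $(3,2)^{sm}$ with $D\cdot R_1>0$ and $R_1\not\subset\N(D,X)$, contracts it to a Fano $X_1$, and then runs a \emph{special Mori program for $-\sigma_0(D)$} on $X_1$. The two alternatives $(i)$ and $(ii)$ arise as the two possible terminations of this specific program, not from a global survey of how the divisorial loci of $X$ intersect one another.

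Your proposed dichotomy --- ``two disjoint divisorial exceptional loci exist'' versus ``no two are disjoint'' --- is not the split used in the actual proof, and your treatment of the second branch has real gaps. You give no mechanism that would force the MMP on $X_1$ to involve only small modifications; absent the guiding divisor $-D$ and the structure of special Mori programs from \cite{codim}, there is no reason to expect divisorial steps to be excluded, nor that the endpoint is an \emph{elementary} conic bundle with $\w X_1$ smooth. Equally, your bound $\rho_Y\leq 10$ ``as for Fano $3$-folds'' is unjustified: $Y$ is only the base of a conic bundle from $\w X_1$, not from $X_1$ itself, and there is no reason it should be Fano. In the paper the inequality $\rho_X\leq 12$ is obtained instead from \cite[Th.~1.1]{eff}, which bounds $\rho_{X_1}\leq 11$ directly from the existence of the rational conic bundle $X_1\dasharrow Y$ on the Fano $4$-fold $X_1$. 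Finally, a smaller point: ruling out types $(3,0)$ and $(3,1)$ by your dimension count is fine, but concluding that a type $(3,2)$ ray is automatically $(3,2)^{sm}$ requires controlling $2$-dimensional fibers; in the paper this is achieved via the extra condition $D\cdot R_1>0$, $R_1\not\subset\N(D,X)$ together with Rem.~\ref{wisn}, not from $c_X=2$ alone.
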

The proof of Prop.~\ref{51} is based on a construction which depends on a prime divisor $D\subset X$ with $\codim\N(D,X)=2$. This construction yields two possible outputs, giving cases $(i)$ and $(ii)$ above. Our strategy to prove Th.~\ref{main} is to exploit the freedom in the choice of $D$: we show that if every prime divisor $D$ with $\codim\N(D,X)=2$ leads to case $(ii)$,  and $\rho_X\geq 7$, then we get a contradiction.

We use the same techniques introduced in \cite{codim}, where
the case $c_X\geq 3$ is studied in arbitrary dimension. The case $c_X=2$ is more difficult, and this is one reason for which we need to work in dimension $4$.

We conclude this introduction by noting that 
the bound $\rho_X\leq 12$ in Th.~\ref{main} follows from the geometric description of the case $\rho_X\geq 7$. Indeed the existence of the rational map $X_1\dasharrow Y$ yields $\rho_{X_1}\leq 11$ by \cite[Th.~1.1]{eff}, and hence $\rho_X\leq 12$.  
The lack of analogous results in higher dimensions is one of the obstructions to study the case $c_X=2$ in general.

\medskip

\noindent\textbf{Acknowledgements.} 
I would like to thank Fyodor L.\ Zak for suggesting a significant improvement in the estimate on $\rho_X$ in Rem.\ \ref{explicit}, in particular 
for pointing out the bound \eqref{zak},
obtained with a more accurate application of his
 results in \cite{zakbounds}.
\subsection*{Notation and terminology}
For any projective variety $X$, we denote by $\N(X)$ (respectively, $\Nu(X)$) the vector space of one-cycles (Cartier divisors), with real coefficients, modulo numerical equivalence. We denote by $[C]$ (respectively, $[D]$) the numerical equivalence class of a curve $C$ (of a Cartier divisor $D$). Moreover, $\NE(X)\subset\N(X)$ is the convex cone generated by classes of effective curves.

\emph{For any closed subset $Z\subset X$, we denote by $\N(Z,X)$ the subspace of $\N(X)$ generated by classes of curves contained in $Z$.}

If $D$ is a Cartier divisor in $X$, we set $D^{\perp}:=\{\gamma\in\N(X)\,|\,D\cdot\gamma=0\}$.

If $X$ is a normal projective variety, a \emph{contraction} of $X$ is a surjective morphism $\ph\colon X\to Y$, with connected fibers, where $Y$ is normal and projective. The contraction is elementary if $\rho_X-\rho_Y=1$.

Let now $X$ be a Fano $4$-fold. Then contractions of $X$ are in bijection with faces of $\NE(X)$, and elementary contractions of $X$ correspond to one-dimensional faces of $\NE(X)$, also called \emph{extremal rays}.

Let $R$ be an extremal ray of $\NE(X)$, and 
 $\ph\colon X \to Y$ the associated contraction. 
If $D$ is a divisor in $X$, the sign of $D\cdot R$ is the sign of $D\cdot C$, $C$ a curve with class in $R$.
We set $\Lo(R):=\Exc(\ph)$, the locus where $\ph$ is not an isomorphism.
We say that $R$ is of type $(a,b)$ where $a=\dim\Exc(\ph)$ and $b=\dim\ph(\Exc(\ph))$. 

\emph{We say that $R$ type $(3,2)^{sm}$ if $Y$ is smooth and $\ph$ is the blow-up of a smooth, irreducible surface in $Y$.} We recall the following very useful property.
\begin{remark}[\cite{wisn}, Th.~1.2]\label{wisn}
Let $X$ be a Fano $4$-fold, $R$ be an extremal ray of $\NE(X)$, and
 $\ph\colon X \to Y$ the associated contraction.  If $\ph$ is birational and  has fibers of dimension $\leq 1$, then $R$ is of type $(3,2)^{sm}$.
In particular, any small extremal ray of $\NE(X)$ is of type $(2,0)$.
\end{remark}
\subsection*{Preliminary results}
In this section we gather some technical results needed for the proof of Th.~\ref{main}.
\begin{remark}\label{easy}
Let $X$ be a Fano manifold, $Z\subset X$ a closed subset, $\ph\colon X\to Y$ a contraction, and $\ph_*\colon\N(X)\to\N(Y)$ the push-forward of one-cycles. Then $\N(\ph(Z),Y)=\ph_*(\N(Z,X))$, so we have:
\stepcounter{thm}
\begin{equation}\label{ea}
\dim\N(\ph(Z),Y)\geq\dim\N(Z,X)-\dim\ker\ph_*.\end{equation}
\end{remark}
\begin{lemma}\label{vari}
  Let $X$ be a Fano $4$-fold with $\rho_X\geq 6$ and $c_X\leq 2$. 
\begin{enumerate}[$(a)$]
\item
Let $E\subset X$ be a prime divisor which is a smooth $\pr^1$-bundle with fiber $f\subset E$, such that $E\cdot f=-1$.
Then $R:=\R_{\geq 0}[f]$ is an extremal ray of type $(3,2)$,
it is 
the unique extremal ray having negative intersection with
$E$, and the target of the contraction of $R$ is Fano.
\item
Suppose that $X$ has
 two extremal rays $R_1$ and $R_2$ of type
$(3,2)$, with loci $E_1$ and $E_2$ respectively, such that $E_1\neq E_2$. 
Then either  $E_1\cdot R_2>0$ and $E_2\cdot R_1>0$, or 
 $E_1\cdot R_2=E_2\cdot R_1=0$. 
\end{enumerate}
\end{lemma}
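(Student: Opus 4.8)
The plan is to treat parts $(a)$ and $(b)$ separately, in both cases combining adjunction, the structure of extremal rays of Fano $4$-folds (especially Remark \ref{wisn}), and the bound $\dim\N(D,X)\geq\rho_X-c_X$, which holds for every prime divisor $D\subset X$ because $c_X\leq 2$. For $(a)$, I would first compute $-K_X\cdot f=1$: adjunction on the $\mathbb{P}^1$-bundle $p\colon E\to S$ (with $S$ a surface) gives $K_E\cdot f=-2$, and $K_E=(K_X+E)|_E$ gives $K_X\cdot f=K_E\cdot f-E\cdot f=-2+1=-1$. Since $E$ is negative on the covering family $\{f\}$ of rational curves of minimal anticanonical degree $1$, a standard argument (cf.\ \cite{codim}) shows that $R=\R_{\geq 0}[f]$ is an extremal ray of $\NE(X)$; if $\sigma\colon X\to Y$ is its contraction, then $\Lo(R)=E$, because $E\cdot R<0$ forces $\Lo(R)\subseteq E$ while the $f$'s cover $E$. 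A curve contracted by $\sigma$ lies in $E$ with class in $\R_{\geq 0}[f]$, hence is contracted by $p$, hence (the $p$-fibers being pairwise disjoint $\mathbb{P}^1$'s) is a single $p$-fiber; so $\sigma$ has $1$-dimensional fibers and, by Remark \ref{wisn}, $R$ is of type $(3,2)^{sm}$, $Y$ is smooth, and $\sigma$ is the blow-up of the smooth surface $S\cong\sigma(E)$. Finally, by Remark \ref{easy}, $\N(S,Y)=\sigma_*\N(E,X)$ has codimension $\codim\N(E,X)\leq c_X\leq 2$ in $\N(Y)$, so $\rho_S\geq\dim\N(S,Y)\geq\rho_Y-2=\rho_X-3\geq 3$.

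Next I would prove uniqueness. Suppose $R'\neq R$ is another extremal ray with $E\cdot R'<0$; then $\Lo(R')\subseteq E$ and curves of $R'$ are multisections of $p$ (not $p$-fibers, else their class would lie in $R$). If $R'$ is small it is of type $(2,0)$ by Remark \ref{wisn}, so $\Lo(R')$ is a surface $G$ of Picard number $\leq 2$; then $p|_G$ is finite on every curve of $G$, and inspecting $p(G)$ contradicts $\rho_S\geq 3$ (if $p(G)=S$ then $\rho_S\leq\rho_G\leq 2$; if $\dim p(G)=1$, a ruling of $G$ is $p$-contracted, hence lies in $R\cap R'=\{0\}$). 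If $R'$ is divisorial, the exceptional divisor of its contraction is $E$; either it has a $2$-dimensional fiber $F\subset E$ (again of small Picard number, and $p|_F$ gives a contradiction), or it is of type $(3,2)^{sm}$, endowing $E$ with a second $\mathbb{P}^1$-bundle structure $p'\colon E\to S'$ having the same tautological bundle $-E|_E$; then $(p,p')\colon E\to S\times S'$ is generically finite onto a $3$-fold $W$, and chasing Picard numbers (using $\rho_E=\rho_S+1=\rho_{S'}+1$ together with $\rho_W\leq\rho_E$ and $\rho_W\geq\rho_S+1$) forces $\rho_S\leq 2$, a contradiction. Hence $R$ is the unique $E$-negative extremal ray. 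Then $Y$ is Fano: otherwise $\overline{\NE(Y)}=\sigma_*\overline{\NE(X)}$ has an extremal ray $\bar R$ with $-K_Y\cdot\bar R\leq 0$, which is $\sigma_*R'$ for the second edge $R'$ of the $2$-dimensional face $R+R'$ of $\NE(X)$; since $-K_X=\sigma^*(-K_Y)-E$ and $-K_X\cdot R'>0$, we get $E\cdot R'<0$ with $R'\neq R$, a contradiction.

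For $(b)$, first $E_1\cdot R_2\geq 0$: otherwise $E_2=\Lo(R_2)\subseteq E_1$, impossible since $E_1\neq E_2$ are distinct irreducible divisors; symmetrically $E_2\cdot R_1\geq 0$. By symmetry it then suffices to show that $E_1\cdot R_2=0$ implies $E_2\cdot R_1=0$. Assume $E_1\cdot R_2=0$. Then $E_1$ is $\sigma_2$-trivial, so writing $E_1-\sigma_2^*\sigma_{2*}E_1=cE_2$ (the only possible exceptional contribution, as $Y_2$ is $\mathbb{Q}$-factorial) and intersecting with a curve of $R_2$ forces $c=0$, because $E_2\cdot R_2<0$; hence $E_1=\sigma_2^*E_1'$ with $E_1'=\sigma_2(E_1)$. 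If $E_1\cap E_2=\emptyset$ we are done. Otherwise $\dim(E_1\cap E_2)=2$, and $Z_2:=\sigma_2(E_2)\not\subseteq E_1'$ (else $E_2\subseteq\sigma_2^{-1}(E_1')=E_1$), so $E_1'\cap Z_2$ is a curve $\Gamma$ and $E_1\cap E_2=\sigma_2^{-1}(\Gamma)$ is the $\mathbb{P}^1$-bundle $E_2|_\Gamma\to\Gamma$, whose normalization has Picard number $2$. Now consider $\sigma_1|_{E_1\cap E_2}$: the fibers of $E_1\cap E_2\to\Gamma$ are curves of $R_2$, not contracted by $\sigma_1$, hence mapped to curves of $Z_1$. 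If $\sigma_1(E_1\cap E_2)\subsetneq Z_1$, then a general fiber of $E_1\to Z_1$ is disjoint from $E_1\cap E_2$, so $E_2\cdot R_1=0$ and we are done. Otherwise $E_1\cap E_2\to Z_1$ is generically finite and surjective, whence $\rho_{Z_1}\leq 2$; but by Remark \ref{easy}, $\N(Z_1,Y_1)=\sigma_{1*}\N(E_1,X)$ has codimension $\leq c_X\leq 2$ in $\N(Y_1)$, so $\rho_{Z_1}\geq\rho_{Y_1}-2=\rho_X-3\geq 3$, a contradiction. Thus $E_2\cdot R_1=0$, and the stated dichotomy follows.

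The main obstacle is the uniqueness assertion in $(a)$: in $(b)$ the hypothesis $E_1\cdot R_2=0$ forces $E_1=\sigma_2^*E_1'$, localizing the whole problem onto a ruled surface over a curve and making the Picard-number count immediate, whereas in $(a)$ a second $E$-negative ray carries no such rigidity and one must push through the full case analysis (small ray; divisorial ray with a $2$-dimensional fiber; divisorial ray of type $(3,2)^{sm}$), the last case requiring the delicate comparison of the Picard numbers of $E$, of the two surface bases $S,S'$, and of $W$. A secondary point is to ensure that the type-$(3,2)$ contractions that occur are in fact of type $(3,2)^{sm}$, so that the relevant exceptional divisors are genuine $\mathbb{P}^1$-bundles; this is where Remark \ref{wisn} together with the bound $\rho\geq\rho_X-c_X$ on the Picard numbers of exceptional divisors is used.
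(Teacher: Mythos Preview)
Your argument for $(b)$ takes a genuinely different route from the paper's, and while the strategy is sound, it is more complicated and carries technical gaps that the paper's proof avoids entirely.

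The paper disposes of $(b)$ in a few lines, working only with the subspaces $\N(\cdot,X)$. Assuming the mixed case $E_1\cdot R_2>0$ and $E_2\cdot R_1=0$, one notes that $E_1\cap E_2$ is a surface; since $E_2\cdot R_1=0$, the contraction of $R_1$ sends $E_1\cap E_2$ to a curve in $Y_1$, so by \eqref{ea} one has $\dim\N(E_1\cap E_2,X)\leq 2$. On the other hand, $E_1\cdot R_2>0$ forces $E_1\cap E_2$ to meet every non-trivial fiber of the contraction of $R_2$, whence $\N(E_2,X)=\N(E_1\cap E_2,X)+\R R_2$ and $\dim\N(E_2,X)\leq 3$, contradicting $c_X\leq 2$ and $\rho_X\geq 6$. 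No smoothness of $Z_i$, no normalizations, and no Picard numbers of auxiliary surfaces enter; in particular the argument works uniformly for type $(3,2)$ without ever upgrading to $(3,2)^{sm}$.

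Your approach instead passes through $\rho_{Z_1}$ and the Picard number of the normalization of $E_1\cap E_2$, and this introduces two soft spots. First, when you assert that the normalization of $E_1\cap E_2$ has Picard number $2$, you are implicitly assuming that $E_2\to Z_2$ is a genuine $\pr^1$-bundle over $\Gamma$; for a type $(3,2)$ ray that is not $(3,2)^{sm}$ there may be $2$-dimensional fibers over points of $\Gamma$, and the normalization can then have larger Picard number. Second, deducing $\rho_{Z_1}\leq 2$ from a generically finite surjection $\widetilde{E_1\cap E_2}\to Z_1$ requires $Z_1$ to be normal, which again needs $R_1$ of type $(3,2)^{sm}$. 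Your closing remark flags this issue but does not resolve it: the hypothesis of $(b)$ is only type $(3,2)$, and nothing in the setup forces the upgrade. The fix is to replace every $\rho_{(\cdot)}$ by $\dim\N(\cdot,X)$ or $\dim\N(\cdot,Y_i)$ and use \eqref{ea} throughout --- at which point the argument essentially collapses into the paper's.

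For $(a)$ the paper simply cites \cite[Rem.~5.3]{eff}, so there is no internal proof to compare against. Your direct argument that $R$ is extremal and of type $(3,2)^{sm}$ is fine. The uniqueness case analysis, however, has real gaps: in the small case you assert $\rho_G\leq 2$ for the type-$(2,0)$ locus $G$ without justification (what you actually know is $\dim\N(G,X)=1$, which does not bound $\rho_G$); and in the ``second $\pr^1$-bundle'' case the inequality $\rho_W\geq\rho_S+1$ for the possibly non-normal image $W\subset S\times S'$ is neither explained nor obviously true.
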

\begin{proof}
Statement $(a)$ follows from \cite[Rem.~5.3]{eff}.

To show $(b)$, assume by contradiction that  $E_1\cdot R_2>0$ and $E_2\cdot R_1=0$.
The two divisors $E_1$ and $E_2$ intersect along a surface,
because $E_1\cdot R_2>0$. Since $E_2\cdot R_1=0$,  the contraction of $R_1$ sends $E_1\cap E_2$ to a curve, and by \eqref{ea} this yields $\dim\N(E_1\cap E_2,X)\leq 2$.

On the other hand, since $E_1\cdot R_2>0$,
$E_1\cap E_2$ meets every non-trivial fiber of  the contraction $\ph\colon X\to Y$
of $R_2$. This means that $\ph_*(\N(E_2,X))=\ph_*(\N(E_1\cap E_2),X)$, and hence
 $\N(E_2,X)=\N(E_1\cap E_2,X)+\R R_2$.
 Thus
$\dim\N(E_2,X)\leq 1+\dim\N(E_1\cap E_2,X)\leq 3$ and
 $c_X\geq \rho_X-3\geq 3$, a contradiction.
\end{proof}
\begin{lemma}\label{uno}
 Let $X$ be a Fano $4$-fold with $\rho_X\geq 6$ and $c_X=2$.
Let  $D\subset X$ be a prime divisor with $\codim\N(D,X)=2$.
Then there exists an extremal ray 
 $R_1$ of $X$, of type $(3,2)^{sm}$, such that
$D\cdot R_1>0$ and
$R_1\not\subset\N(D,X)$.
\end{lemma}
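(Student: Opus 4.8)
The plan is to reduce the statement to two claims, dispatch one by dimension counting, and locate the difficulty in the other.

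Since $\codim\N(D,X)=2$ and $\rho_X\ge 6$, we have $\dim\N(D,X)=\rho_X-2\ge 4$. The first, easy observation is that every extremal ray $R$ with $R\not\subset\N(D,X)$ satisfies $D\cdot R\ge 0$: if $D\cdot R<0$, then every irreducible curve with class in $R$ is contained in $D$, so $\Lo(R)\subseteq D$ and $R\subseteq\N(\Lo(R),X)\subseteq\N(D,X)$, a contradiction. Thus it suffices to show: (A) there is an extremal ray $R_1\not\subset\N(D,X)$ with $D\cdot R_1>0$; and (B) such an $R_1$ can be chosen of type $(3,2)^{sm}$.

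For (B), the following dimension count constrains any extremal ray $R_1\not\subset\N(D,X)$ with $D\cdot R_1>0$, with contraction $\ph\colon X\to Y$: since $\ker\ph_*=\R R_1$ meets $\N(D,X)$ trivially, $\ph_*$ is injective on $\N(D,X)$. If $\ph$ were of fibre type, then $D$ would dominate $Y$ (as $D\cdot R_1>0$), whence $\N(\ph(D),Y)=\N(Y)$; by Remark \ref{easy} this gives $\dim\N(D,X)\ge\rho_Y=\rho_X-1$, contradicting $\codim\N(D,X)=2$. So $\ph$ is birational. Moreover $\ph$ cannot have a fibre of dimension $3$, for a component of such a fibre would be a prime divisor $D'$ with $\N(D',X)\subseteq\ker\ph_*$, so $\codim\N(D',X)\ge\rho_X-1\ge 5>c_X$; and $\ph$ cannot be of type $(3,1)$, since then $\N(\Exc(\ph),X)$ has dimension $\le 2$, giving $\codim\N(\Exc(\ph),X)\ge\rho_X-2\ge 4>c_X$. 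By Remark \ref{wisn}, the only possibilities left besides type $(3,2)^{sm}$ are that $\ph$ is of type $(2,0)$, or of type $(3,2)$ with a $2$-dimensional fibre; to exclude these I would choose $R_1$ from the outset among the extremal rays already known to be of type $(3,2)^{sm}$ — which is the content of (A).

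For (A), I would use the results of \cite{codim,eff}: a Fano $4$-fold with $\rho_X\ge 6$ and $c_X\le 2$ has many extremal rays of type $(3,2)^{sm}$, and for such a ray the exceptional divisor $E_1$ is a smooth $\pr^1$-bundle with $E_1\cdot R_1=-1$ (cf.\ Lemma \ref{vari}(a), which pins these rays down). Since the extremal rays span $\N(X)$ while $\codim\N(D,X)=2$, at least one such $R_1$ is not contained in $\N(D,X)$. By the first paragraph $D\cdot R_1\ge 0$; if $D\cdot R_1=0$, then $D\cdot f=0$ for the fibre class $f$ of $E_1$, so each fibre either lies in $D$ or is disjoint from it, and a fibre contained in $D$ would put $R_1=\R_{\ge0}[f]$ in $\N(D,X)$ — hence $D\cap E_1=\varnothing$. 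The heart of the argument is to exclude this configuration: using Lemma \ref{vari}(b) (the exceptional divisors of two $(3,2)^{sm}$-rays meet along a surface unless they are orthogonal), the fact from Lemma \ref{vari}(a) that the targets of these contractions are again Fano (so that one may iterate), and the codimension-$2$ constraint on $\N(D,X)$, one should show that $D$ cannot be disjoint from the exceptional divisor of every $(3,2)^{sm}$-ray lying outside $\N(D,X)$; for a suitable such $R_1$ one then has $D\cdot R_1>0$, and by the second paragraph $R_1$ is of type $(3,2)^{sm}$.

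The routine part is the dimension bookkeeping of the second paragraph. The main obstacle is the last step of the third: excluding the possibility that $D$ is disjoint from the exceptional divisor of every $(3,2)^{sm}$-ray whose class lies outside $\N(D,X)$ (equivalently, that every such ray is orthogonal to $D$). This is precisely where the freedom in the geometry of $X$ — the abundance and mutual incidence of $(3,2)^{sm}$-contractions, and the Fano-ness of the intermediate targets, as developed in \cite{codim,eff} — has to be exploited, and I expect it to require a careful case analysis rather than a short argument.
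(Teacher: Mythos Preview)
Your decomposition into (A) and (B) is sensible, and your dimension bookkeeping in (B) is correct (if more elaborate than needed). But you have correctly identified, and not closed, the genuine gap: part (A). You end by saying you \emph{expect} excluding $D\cap E_1=\varnothing$ to require a careful case analysis; as written, the proposal does not carry this out, so the argument is incomplete.

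The paper avoids this difficulty entirely by invoking a single external result. It cites \cite[Prop.~2.5]{codim}, which already produces a prime divisor $E_1$ that is a smooth $\pr^1$-bundle with fibre $f_1$, satisfying $E_1\cdot f_1=-1$, $D\cdot f_1>0$, and $[f_1]\notin\N(D,X)$. In other words, Prop.~2.5 delivers all of your (A) at once: positivity of $D\cdot R_1$ is part of its conclusion, not something to be wrung out afterwards. Lemma~\ref{vari}(a) then makes $R_1:=\R_{\ge 0}[f_1]$ an extremal ray of type $(3,2)$, so your separate exclusion of fibre-type and small contractions is unnecessary. For the upgrade to $(3,2)^{sm}$ the paper uses a one-line version of your (B): since $D\cdot R_1>0$, every non-trivial fibre of the contraction meets $D$; a fibre of dimension $\ge 2$ would then contain a curve lying in $D$ with class in $R_1$, forcing $R_1\subset\N(D,X)$; hence all fibres have dimension $\le 1$, and Remark~\ref{wisn} gives type $(3,2)^{sm}$.

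So the missing idea is simply to quote \cite[Prop.~2.5]{codim} rather than attempt to rebuild it. Your instinct that the hard content lives in the construction of such an $E_1$ is right; that construction is exactly what Prop.~2.5 packages, and the present lemma is meant to be a short consequence of it, not an independent argument.
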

\begin{proof}
By \cite[Prop.~2.5]{codim}, there exists a prime divisor $E_1\subset X$ which is a smooth $\pr^1$-bundle with fiber $f_1\subset E_1$, such that $E_1\cdot f_1=-1$, $D\cdot f_1>0$, and $[f_1]\not\in\N(D,X)$. By Lemma~\ref{vari}$(a)$,  $R_1:=\R_{\geq 0}[f_1]$ is an extremal
ray of type $(3,2)$. Since $D\cdot R_1>0$, every non-trivial fiber of the contraction of $R_1$ must intersect $D$. On the other hand $R_1\not\subset\N(D,X)$, thus every such fiber must have dimension $1$, and $R_1$ is of type $(3,2)^{sm}$ (see Rem.~\ref{wisn}).
\end{proof}
\begin{lemma}\label{due}
 Let $X$ be a Fano $4$-fold with $\rho_X\geq 6$ and $c_X=2$, and assume that $X$ does not satisfy the statement of Theorem \ref{main}.

Then
for any prime divisor 
  $D\subset X$ with $\codim\N(D,X)=2$, and for any
 extremal ray $R_1$ as in Lemma \ref{uno},
 there exists a second extremal ray $R_2$ of
type $(3,2)^{sm}$, with the following properties, where $E_i:=\Lo(R_i)$ for $i=1,2$:
\begin{enumerate}[$(a)$]
\item  $E_1\cap E_2=\emptyset$;
\item $R_1+R_2$ is a face of $\NE(X)$, whose contraction $\sigma\colon X\to X_2$ is the smooth blow-up of two disjoint irreducible surfaces in $X_2$, and $X_2$ is Fano;
\item  $D\cdot R_i>0$ and
  $R_i\not\subset\N(D,X)$ (in particular $D\neq E_i$), for $i=1,2$;
\item $\codim\N(E_i,X)=2$, $\N(D\cap E_i,X)=\N(D,X)\cap\N(E_i,X)$,
 and $\codim\N(D\cap E_i,X)=3$, for $i=1,2$. 
\end{enumerate}
\end{lemma}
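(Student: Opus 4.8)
The starting point is Proposition~\ref{51}: since $X$ does not satisfy the statement of Theorem~\ref{main}, case $(i)$ is excluded, so case $(ii)$ must hold. Thus $X$ admits a blow-up $\sigma\colon X\to X_2$ of two disjoint smooth irreducible surfaces, with $X_2$ Fano. Let $R_1',R_2'$ be the two extremal rays of type $(3,2)^{sm}$ contracted by $\sigma$, with disjoint loci $E_1',E_2'$. The first task is to show that the ray $R_1$ furnished by Lemma~\ref{uno} is \emph{one of} these two rays. Indeed, $R_1$ is of type $(3,2)^{sm}$ with locus $E_1$, and $E_1$ is a smooth $\pr^1$-bundle $E_1\to S$ with fiber $f_1$ satisfying $E_1\cdot f_1=-1$. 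By Lemma~\ref{vari}$(a)$, $R_1$ is the \emph{unique} extremal ray with negative intersection with $E_1$. If $R_1$ were different from both $R_1'$ and $R_2'$, one would study the intersections $E_1\cdot R_i'$ and $E_i'\cdot R_1$: using Lemma~\ref{vari}$(b)$ applied to pairs of distinct $(3,2)$-rays, together with the fact that after contracting both $R_1'$ and $R_2'$ one lands in the Fano fourfold $X_2$ where $\sigma(E_1)$ would still be a divisor, one derives a contradiction with $c_X=2$ and $\rho_X\geq 6$ (this is the mechanism already used in the proof of Lemma~\ref{vari}$(b)$: an extra $(3,2)$-ray meeting $E_1$ forces $\dim\N(E_1,X)$ to drop too far). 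Hence $R_1\in\{R_1',R_2'\}$, and we set $R_2$ to be the other one; this immediately gives $(a)$, namely $E_1\cap E_2=\emptyset$, and $(b)$, since $\sigma$ is exactly the contraction of the face $R_1+R_2$ onto the Fano fourfold $X_2=X_2$.

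Next I would establish $(c)$. That $D\cdot R_1>0$ and $R_1\not\subset\N(D,X)$ is part of the conclusion of Lemma~\ref{uno}, so only the statements for $R_2$ remain. For $D\cdot R_2$: since $E_1\cap E_2=\emptyset$ and $D\cdot R_1>0$, the divisor $D$ must intersect $E_1$, hence $\sigma(D)$ meets $\sigma(E_1)$'s image (a point or curve) but — more to the point — I would argue that if $D\cdot R_2=0$ then $\sigma$ sends $D$ isomorphically (outside a lower-dimensional set) to a divisor $D_2\subset X_2$, and pull back the invariant: $\codim\N(D_2,X_2)$ would be controlled, forcing $c_{X_2}\geq 2$, while $\rho_{X_2}=\rho_X-2\geq 4$; then one reruns Lemma~\ref{uno}-type reasoning on $X_2$ or invokes the structure of $\N(D,X)$ directly. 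The cleaner route: if $D\cdot R_2=0$, the contraction $\ph_2\colon X\to Y_2$ of $R_2$ maps $D$ to a divisor and $E_2$ to a surface, and since $D\cap E_2$ is sent to a curve (or is empty), whereas $D$ meets every fiber of $\ph_2$ over $\ph_2(E_2)$ only if $D\cdot R_2>0$ — so $D\cap E_2$ need not meet all fibers, but then $\N(D,X)$ contains $R_2$... This is where some care is needed; the expected output is $D\cdot R_2>0$ and, by the same fiber-dimension argument as in Lemma~\ref{uno} (every non-trivial fiber of the contraction of $R_2$ meets $D$, so has dimension $1$, consistent with type $(3,2)^{sm}$), also $R_2\not\subset\N(D,X)$. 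The inequality $D\neq E_i$ then follows since $D\cdot R_i>0$ while $E_i\cdot R_i=E_i\cdot f_i=-1<0$.

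Finally, part $(d)$. For each $i$, $E_i$ is a smooth $\pr^1$-bundle $p_i\colon E_i\to S_i$ over a smooth surface $S_i\subset X_2$, so $\rho_{E_i}=\rho_{S_i}+1$ and $\N(E_i,X)$ is spanned by $R_i$ together with $p_i^*\N(S_i)$; concretely $\dim\N(E_i,X)\leq \rho_{S_i}+1$. Since $X_2$ is Fano, $c_{X_2}\leq c_X=2$ would — no: instead I use that $\sigma$ identifies $\N(X_2)$ with $\N(X)/\R R_1\oplus\R R_2$... more precisely $\sigma_*\colon\N(X)\to\N(X_2)$ has kernel $R_1+R_2$ and restricts to an iso on the complement, and $\sigma_*(\N(E_i,X))=\N(S_i,X_2)$. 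Now $\codim_{\N(X_2)}\N(S_i,X_2)\leq c_{X_2}$; combined with $\rho_{X_2}=\rho_X-2$ one gets $\dim\N(E_i,X)\geq \rho_X-2-c_{X_2}+1$. To pin down $\codim\N(E_i,X)=2$ exactly, I would use $\rho_X\geq 7$ and the bound $c_{X_2}\leq c_X=2$ (Proposition~\ref{51} context gives $\rho_{X_2}\geq 5$, and a Fano fourfold that is not a product with $c\geq 3$ falls under Theorem~\ref{tre}, forcing $\rho_{X_2}\leq 6$; one must also rule out $X_2$ a product — but $c_{X_2}=2$ for a product needs $\rho_{X_2}\leq 6$ too). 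Squeezing these inequalities yields $\codim\N(E_i,X)=2$. For the identity $\N(D\cap E_i,X)=\N(D,X)\cap\N(E_i,X)$: the inclusion $\subseteq$ is trivial; for $\supseteq$ I would use that $D\cdot R_i>0$ forces $D\cap E_i$ to meet every non-trivial fiber $f_i$ of $p_i$, so $\ph_i{}_*\N(E_i,X)=\ph_i{}_*\N(D\cap E_i,X)$ where $\ph_i$ contracts $R_i$ — the same argument as in Lemma~\ref{vari}$(b)$ — giving $\N(E_i,X)=\N(D\cap E_i,X)+\R R_i$; intersecting with $\N(D,X)$, and noting $R_i\not\subset\N(D,X)$ by $(c)$, forces the intersection to lie in $\N(D\cap E_i,X)$. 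The codimension count $\codim\N(D\cap E_i,X)=3$ then follows from $\dim(\N(D,X)\cap\N(E_i,X))=\dim\N(D,X)+\dim\N(E_i,X)-\dim(\N(D,X)+\N(E_i,X))$, using $\codim\N(D,X)=\codim\N(E_i,X)=2$ and showing $\N(D,X)+\N(E_i,X)=\N(X)$ — the last because both contain "complementary" directions: $R_i\in\N(E_i,X)\setminus\N(D,X)$ and a transversality argument from $D\cdot R_i>0$.

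\textbf{Main obstacle.} The delicate point is showing that the ray $R_1$ from Lemma~\ref{uno} must coincide with one of the two blow-up rays of the map $\sigma$ from case $(ii)$ of Proposition~\ref{51}, i.e.\ that the construction underlying Proposition~\ref{51} is ``compatible'' with the freedom in choosing $D$ and $R_1$. This requires carefully tracking how $\N(E_1,X)$, $\N(D,X)$, and the images under the various contractions interact, and using the uniqueness in Lemma~\ref{vari}$(a)$ together with Lemma~\ref{vari}$(b)$ to exclude the presence of a third independent $(3,2)$-ray. The numerical bookkeeping for part $(d)$ — getting the codimensions to be \emph{exactly} $2$ and $3$ rather than ``at most'' — is also somewhat intricate and is precisely where the hypothesis $\rho_X\geq 7$ (as opposed to $\geq 6$) and the classification in Theorem~\ref{tre} get used.
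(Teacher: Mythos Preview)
Your approach has a genuine gap at precisely the point you flag as the ``main obstacle''. You apply Proposition~\ref{51} abstractly to obtain \emph{some} pair of rays $R_1',R_2'$, and then try to argue a posteriori that the ray $R_1$ from Lemma~\ref{uno} must be one of them. The sketch you give for this step (using Lemma~\ref{vari}$(b)$ and the uniqueness in Lemma~\ref{vari}$(a)$ to exclude a third independent $(3,2)$-ray) does not go through: there is no reason in general why a Fano $4$-fold with $c_X=2$ cannot have three or more distinct extremal rays of type $(3,2)^{sm}$, and nothing in the hypotheses forces $R_1$ to lie in the particular face $R_1'+R_2'$ produced by one run of Proposition~\ref{51}.

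The paper avoids this difficulty entirely by exploiting the fact that the construction behind Proposition~\ref{51} is not canonical: it depends on the choice of a special Mori program for $-D$. Since the contraction $\sigma_0\colon X\to X_1$ of $R_1$ has Fano target (Lemma~\ref{vari}$(a)$), one can run a special Mori program for $-\sigma_0(D)$ on $X_1$ and prepend $\sigma_0$ to it, obtaining a special Mori program for $-D$ on $X$ whose \emph{first step is the contraction of $R_1$}. Feeding this specific program into the proof of Proposition~\ref{51} (with case $(i)$ excluded by hypothesis) produces $\sigma\colon X\to X_2$ as the contraction of a face $R_1+R_2$ containing $R_1$ by construction, and moreover yields $D\cdot R_2>0$ directly. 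Parts $(c)$ and $(d)$ then follow from results already in \cite{codim} (Lemma~3.1.8 and Rem.~3.1.3(2)), with a short dimension count; no appeal to $\rho_X\geq 7$ or Theorem~\ref{tre} is needed, and the Lemma holds under the stated hypothesis $\rho_X\geq 6$.
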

\begin{proof}
We use the construction of a special Mori program for $-D$ introduced in \cite[\S 2]{codim}; we refer the reader to \emph{loc.\ cit}.\ and references therein for the terminology. A special Mori program is just a Mori program where all involved extremal rays have positive intersection with the anticanonical divisor, see \cite[Prop.~2.4]{codim}.

Let $\sigma_0\colon X\to X_1$ be the contraction of $R_1$; notice that  
$X_1$ is Fano by Lemma \ref{vari}$(a)$.
By \cite[Prop.~2.4]{codim}, we can consider a special Mori program for the divisor
$-\sigma_0(D)$ in $X_1$. Together with $\sigma_0$, this yields a special
Mori program for $-D$ in $X$, where the first step is precisely
$\sigma_0$.

We now apply 
\cite[proof of Prop.~5.1]{eff} to this special Mori program. Since we are excluding by assumption case $(i)$ of Prop.~\ref{51}, we know that there exist a smooth Fano $4$-fold $X_2$ and a contraction $\sigma\colon X\to X_2$ which is the 
 blow-up of  two disjoint smooth irreducible surfaces. More precisely,
the proof of  Prop.~\ref{51} shows that $\sigma$ is the contraction of $R_1+R_2$, where
$R_2$ is an extremal ray of type $(3,2)^{sm}$, with locus $E_2$, such that
 $E_1\cap E_2=\emptyset$ and $D\cdot R_2>0$. In particular we have $(a)$ and $(b)$.

Fix $i\in\{1,2\}$. 
By  \cite[Lemma 3.1.8]{codim} we have $\codim\N(E_i,X)=2$ and $\codim\N(D\cap E_i,X)=3$. Notice that $\N(D,X)\neq\N(E_i,X)$, for instance because $\N(E_i,X)$ is contained in $(E_{3-i})^{\perp}$, while $\N(D,X)$ is not.
On the other hand 
$\N(D\cap E_i,X)\subseteq \N(D,X)\cap\N(E_i,X)$, and looking at dimensions we see that equality holds. So we have $(d)$.

Finally  $R_2\not\subset\N(D\cap E_2,X)$ by \cite[Rem.~3.1.3(2)]{codim}, hence
 $R_2\not\subset\N(D,X)$ by $(d)$, and we have $(c)$.
\end{proof}
\begin{lemma}\label{giusto}
 Let $X$ be a Fano $4$-fold with $\rho_X\geq 6$ and $c_X=2$, and assume that $X$ does not satisfy the statement of Theorem \ref{main}.

Let $R$ and $R'$ be two extremal rays of $X$ of type $(3,2)^{sm}$, and set
$E:=\Lo(R)$ and $E':=\Lo(R')$. Suppose that
$\codim\N(E,X)=\codim\N(E',X)=2$, $E\cdot R'>0$, $E'\cdot R>0$,
$R\not\subset\N(E',X)$,
and
$R'\not\subset\N(E,X)$.

Let $S$ be an extremal ray different from $R$ and $R'$. If the
contraction of $S$ is not finite on
$E\cup
E'$, then $E\cdot S=E'\cdot S=0$.
\end{lemma}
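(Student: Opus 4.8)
The plan is to study the contraction $\psi\colon X\to Z$ of $S$ and distinguish cases according to its type, using throughout that the hypotheses — and the conclusion — are symmetric under exchanging $(R,E)$ with $(R',E')$. For the reductions: since $R$ is of type $(3,2)^{sm}$, $E$ is a smooth $\pr^1$-bundle with $E\cdot f=-1$ for its fiber $f$, so by Lemma~\ref{vari}$(a)$ the ray $R$ is the unique extremal ray negative on $E$; as $S\neq R$, this gives $E\cdot S\geq 0$, and likewise $E'\cdot S\geq 0$. The hypothesis that $\psi$ is not finite on $E\cup E'$ means $\psi$ contracts some irreducible curve contained in $E$ or in $E'$, so, applying the symmetry if necessary, I may assume there is an irreducible curve $C_0\subseteq E$ with $[C_0]\in S$. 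Then $S\subseteq\N(E,X)$, so $\ker\psi_*=\R S\subseteq\N(E,X)$, and \eqref{ea} gives $\dim\N(\psi(E),Z)=\dim\N(E,X)-1=\rho_X-3$; since $\rho_Z=\rho_X-1$, this forces $\psi(E)\neq Z$. Likewise $\dim\N(\psi(E'),Z)\le\dim\N(E',X)=\rho_X-2<\rho_Z$ forces $\psi(E')\neq Z$.

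Next I would show $E\cdot S=0$ (and, symmetrically, $E'\cdot S=0$). Assume for contradiction $E\cdot S>0$; then every effective curve whose class is a positive multiple of the generator of $S$ meets $E$. If $\psi$ is of fiber type, then $\dim Z\in\{2,3\}$ (it is $<4$, and $\ge2$ because $\rho_Z\ge5$); every fiber of $\psi$ is positive-dimensional, hence contains such a curve, hence meets $E$, so $\psi(E)=Z$ — a contradiction. Thus for fiber type $\psi$ one gets $E\cdot S=0$, and the same argument applied to $E'$ (using only $\psi(E')\neq Z$) gives $E'\cdot S=0$; this settles the fiber-type case.

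It remains to treat birational $\psi$, with $G:=\Exc(\psi)$; then $\dim Z=4$ and $C_0\subseteq E\cap G$. If $\psi$ is divisorial, $G$ is a prime divisor with $G\cdot S<0$, so $G\notin\{E,E'\}$ by the signs of $E\cdot S$, $E'\cdot S$; as $S\subseteq\N(G,X)$ and $\codim\N(G,X)\le c_X=2$, \eqref{ea} applied to $\psi|_G$ excludes $\dim\psi(G)\le1$, so $\dim\psi(G)=2$. If moreover every fiber of $\psi$ has dimension $\le1$, then by Remark~\ref{wisn} $\psi$ is of type $(3,2)^{sm}$: $G\to W:=\psi(G)$ is a smooth $\pr^1$-bundle with $G\cdot S=-1$, and $C_0$ is one of its fibers. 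Then $R$ and $S$ are type-$(3,2)^{sm}$ rays with distinct loci $E$ and $G$, so by Lemma~\ref{vari}$(b)$ either $E\cdot S=0$, or ($E\cdot S>0$ and $G\cdot R>0$); in the latter case the surface $E\cap G$ meets every fiber of the $\pr^1$-bundle $E$ and every fiber of $G\to W$, so, as in the proof of Lemma~\ref{vari}$(b)$, $\N(E,X)=\N(E\cap G,X)+\R R$ and $\N(G,X)=\N(E\cap G,X)+\R S$; comparing dimensions along the contractions of $R$ and of $S$ forces $\N(E,X)=\N(G,X)=\N(E\cap G,X)$, of codimension exactly $2$, which — combined with the analogous relations obtained by running Lemma~\ref{vari}$(b)$ on $(R',S)$ and with the hypotheses $R'\not\subseteq\N(E,X)$, $E\cdot R'>0$, $E'\cdot R>0$ (which force $\N(E,X)\neq\N(E',X)$) — is contradictory; hence $E\cdot S=0$ and, symmetrically, $E'\cdot S=0$. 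Finally, if $\psi$ is not of type $(3,2)^{sm}$ — that is, $\psi$ is small, or divisorial with a $2$-dimensional fiber (contractions to a point or to a curve having been excluded above) — then there is a surface $F\subseteq X$ contracted by $\psi$ to a point, with $\N(F,X)=\R S$; analysing the position of $F$ relative to $E$ and $E'$, and when necessary descending along the contraction of $R$ to the Fano $4$-fold $X_1$ (with $\rho_{X_1}=\rho_X-1\ge5$ and $c_{X_1}\le2$, where the image $F_1$ of $F$ has $\dim\N(F_1,X_1)=1$), one again excludes $E\cdot S>0$ and $E'\cdot S>0$.

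The main obstacle is the birational case. The reductions, the fiber-type case, and the coarse classification of a birational $\psi$ are routine; but the linear-algebra bookkeeping needed to rule out $E\cdot S>0$ in the type-$(3,2)^{sm}$ sub-case, and the treatment of the small and fat-fiber sub-cases, are where the real work lies.
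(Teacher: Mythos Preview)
Your approach is quite different from the paper's, and it has a genuine gap in the birational case. The most telling symptom is that you never use the hypothesis that $X$ fails the conclusion of Theorem~\ref{main}; the paper's proof uses it essentially, via Lemma~\ref{due}.

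Concretely, in your type-$(3,2)^{sm}$ sub-case you assume $E\cdot S>0$, obtain $\N(E,X)=\N(G,X)=\N(E\cap G,X)$ of codimension~$2$ (this part is fine), and then claim a contradiction by ``running Lemma~\ref{vari}$(b)$ on $(R',S)$''. But Lemma~\ref{vari}$(b)$ gives two alternatives: either $E'\cdot S>0$ and $G\cdot R'>0$, or $E'\cdot S=G\cdot R'=0$. In the second alternative there is nothing contradictory in sight: you have $\N(E,X)=\N(G,X)$, $R'\not\subset\N(E,X)$, and $E'\cdot S=0$, but this is perfectly compatible with $E\cdot S>0$. Even in the first alternative, the ``analogous relation'' $\N(G,X)=\N(E',X)$ you seem to want requires a curve in $E'\cap G$ with class in $S$, which you do not have; what one actually gets is only $\N(E',X)=\N(E'\cap G,X)+\R R'$ and $\N(G,X)=\N(E'\cap G,X)+\R S$, and chasing dimensions yields $\N(E'\cap G,X)=\N(E,X)\cap\N(E',X)$ of codimension~$3$ --- consistent, not contradictory. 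Your small/fat-fiber sub-case is only a sketch and carries the same problem.

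The paper proceeds entirely differently: it never classifies the type of $S$. Instead it applies Lemma~\ref{due} with $D=E$ and $R_1=R'$ (this is where the hypothesis on $X$ enters) to produce a third ray $R''$ of type $(3,2)^{sm}$ with locus $E''$ satisfying $E'\cap E''=\emptyset$, $E\cdot R''>0$, and $R''\not\subset\N(E,X)$. Then, since $E''\cdot R>0$, a result of Occhetta (quoted as \cite[Rem.~3.1.3(3)]{codim}) gives an irreducible curve $C_1\subset E\cap E''$ and a relation $C\equiv\lambda f+\mu C_1$ with $\mu\ge 0$, for $C\subset E$ with $[C]\in S$. Extremality of $S$ forces $\lambda\le 0$, and intersecting with $E'$ (which is disjoint from $E''$, so $E'\cdot C_1=0$) gives $E'\cdot C=\lambda\,E'\cdot f\le 0$, hence $E'\cdot S=0$. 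Repeating with $R'$ and $R''$ swapped produces a curve in $E\cap E'$ with class in $S$, so the contraction of $S$ is not finite on $E'$ either, and by the first step $E\cdot S=0$. The disjointness $E'\cap E''=\emptyset$, supplied by Lemma~\ref{due}, is the key ingredient your argument lacks.
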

\begin{proof}
We notice first of all that, since $S\neq R$ and $S\neq R'$, we have
$E\cdot S\geq 0$ and $E'\cdot S\geq 0$ by Lemma \ref{vari}$(a)$.

Let us assume that the contraction
of $S$ is not finite on $E$, and let $C\subset E$ be an irreducible
curve with $[C]\in S$.

By Lemma \ref{due}, applied with $D=E$ and $R_1=R'$, 
there exists an extremal ray $R''$, of type $(3,2)^{sm}$ and with locus
$E''$, such that:
$$E'\cap E''=\emptyset,\quad E\cdot R''>0,\quad\text{and}\quad R''\not\subset\N(E,X).$$ 
We have $E''\cdot R>0$ by Lemma \ref{vari}$(b)$,
moreover $S\neq R''$ because $S=\R_{\geq 0}[C]\subset\N(E,X)$. 
Therefore $E''\cdot S\geq 0$ by Lemma \ref{vari}$(a)$.

Let $f\subset E$ be an irreducible curve with class in $R$. 
Since $E''\cdot R>0$, we know after
\cite{occhetta} (see \cite[Rem.~3.1.3(3)]{codim}) that there exists
an irreducible curve $C_1$, contained in
$E\cap E''$, such that $C\equiv \lambda
f+\mu C_1$, where $\lambda,\mu\in\Q$, and $\mu\geq 0$. 

Thus $\lambda[f]+\mu[C_1]=[C]\in S$, and since $S$ is an extremal ray of
$\NE(X)$ and $[f]\not\in S$ (for $R\neq S$), we must have $\lambda\leq 0$. On the other
hand since $C_1\subset E''$ and $E'\cap E''=\emptyset$, 
we have $E'\cdot C_1=0$, and hence
$$E'\cdot C=\lambda E'\cdot f\leq 0$$
(recall that $E'\cdot f>0$ by assumption). 
This implies that $E'\cdot S=0$, $\lambda=0$, and $[C_1]\in S$.

Therefore we have shown that if the contraction
of $S$ is not finite on $E$, then $E'\cdot S=0$.

Moreover, since $E'\cdot R>0$, $E''\cdot R>0$, and $E''\cdot S\geq 0$,
we can repeat 
 the same argument 
with the roles of $R'$ and $R''$ interchanged, 
to find $C\equiv \lambda_2
f+\mu_2 C_2$, where $\lambda_2,\mu_2\in\Q$, $\mu_2\geq 0$, and $C_2$ is an irreducible curve contained in $E\cap E'$. In the same way we conclude that 
$[C_2]\in
S$. This means that the contraction of $S$ is not finite on $E'$
neither, thus $E\cdot S=0$ by what precedes. 
\end{proof}
\subsection*{Proof of Theorem \ref{main}}
Let us assume that $\rho_X\geq 7$. Then $X$ cannot be
 a product of surfaces, for otherwise $\rho_X\leq 2c_X+2=6$ (see on p.~\pageref{product}). 

\emph{We proceed by contradiction, and suppose that 
$X$ does not satisfy the statement.}
\begin{step}\label{s1}
There exist three extremal rays $R_0,R_1,R_2$, of type $(3,2)^{sm}$, such that if $E_i:=\Lo(R_i)$ for $i=0,1,2$, we have the following properties:
\begin{enumerate}[$(a)$]
\item
$\codim\N(E_i,X)=2$ for $i=0,1,2$;
\item
the divisors $E_0,E_1,E_2$ are distinct, and $E_1\cap E_2=\emptyset$;
\item $R_1+R_2$ is a face of $\NE(X)$, whose contraction $\sigma\colon X\to X_2$ is the smooth blow-up of two disjoint irreducible surfaces in $X_2$, and $X_2$ is Fano;
\item
$E_0\cdot R_i>0$ and $E_i\cdot R_0>0$ for $i=1,2$;
\item
$R_0\not\subset\N(E_i,X)$ and $R_i\not\subset\N(E_0,X)$ for $i=1,2$;
\item
$\codim\N(E_0\cap E_i)=3$ for $i=1,2$.
\end{enumerate}
\end{step}
\begin{proof}[Proof of step \ref{s1}]
Since $c_X=2$, there exists some prime divisor $D\subset X$ with $\codim\N(D,X)=2$. Lemma \ref{uno} yields the existence of an extremal ray $R_0$, of type $(3,2)^{sm}$, such that $D\cdot R_0>0$ and $R_0\not\subset\N(D,X)$. Then
 Lemma \ref{due}$(d)$ implies, in particular,
that $\codim\N(E_0,X)=2$, where $E_0:=\Lo(R_0)$.

Now we apply Lemmas \ref{uno} and \ref{due} to the divisor $E_0$. We deduce the existence for two extremal rays $R_1$ and $R_2$, of type $(3,2)^{sm}$, with loci $E_1$ and $E_2$ respectively, such that $(a)$, $(b)$, $(c)$, and $(f)$ hold, and moreover $E_0\cdot R_i>0$, 
  $R_i\not\subset\N(E_0,X)$, and
$\N(E_0\cap E_i,X)=\N(E_0,X)\cap\N(E_i,X)$,  for $i=1,2$.

Fix $i\in\{1,2\}$. 
We have $E_i\cdot R_0>0$ by Lemma \ref{vari}$(b)$, hence $(d)$ holds. 
Moreover \cite[Rem.~3.1.3(2)]{codim} yields $R_0\not\subset \N(E_0\cap E_i,X)=\N(E_0,X)\cap\N(E_i,X)$, therefore $R_0\not\subset\N(E_i,X)$, and $(e)$ holds too.
\end{proof}
Consider the blow-up $\sigma\colon X\to X_2$ given by step \ref{s1}$(c)$.
Let
$R'$ be an extremal ray of $\NE(X_2)$ such that $\sigma(E_0)\cdot
R'>0$, and let $\ph\colon X_2\to Y$ be the associated contraction.
We have
 $\rho_Y=\rho_X-3\geq 4$, so that $\dim Y\geq 2$.
$$X\stackrel{\sigma}{\longrightarrow} X_2\stackrel{\ph}{\longrightarrow} Y$$
\begin{step} The case where $\ph$ 
is birational. \end{step}
\noindent We
have $R'=\sigma_*(R_3)$, $R_3$ an extremal ray of $\NE(X)$ 
(see \cite[\S 2.5]{fanos}); in particular $R_3\neq R_1$ and $R_3\neq R_2$.
Since $\sigma(\Lo(R_3))\subseteq \Lo(R')\subsetneq X_2$, the contraction of $R_3$ is birational.
Notice also that $R_3\neq R_0$, for otherwise the locus of $R'$ should be $\sigma(E_0)$, which is excluded because $\sigma(E_0)\cdot R'>0$. By Lemma \ref{vari}$(a)$ we know that $E_i\cdot R_3\geq 0$ for $i=0,1,2$.

We have
$$\sigma^*(\sigma(E_0))=E_0+m_1E_1+m_2E_2$$
with $m_i>0$ (since $E_0\cdot R_i>0$ by step \ref{s1}$(d)$) for $i=1,2$, and $\sigma^*(\sigma(E_0))\cdot R_3>0$ by the projection formula, 
therefore at least one of the intersections $E_0\cdot
R_3$, $E_1\cdot R_3$, $E_2\cdot R_3$  is positive. 

Up to exchanging $R_1$ and $R_2$, we can assume that the intersections $E_0\cdot
R_3$ and $E_1\cdot R_3$ are not both zero. Applying Lemma \ref{giusto}
with $R=R_0$, $R'=R_1$, and $S=R_3$ (notice that the assumptions are satisfied by step \ref{s1}$(a)$,$(d)$,$(e)$), we see that the contraction of
$R_3$ must be finite on $E_0\cup E_1$. On the other hand every curve
with class in $R_3$ must intersect $E_0\cup E_1$, because $(E_0+E_1)\cdot R_3>0$. We conclude that the
contraction of $R_3$ has fibers of dimension at most $1$, therefore
 $R_3$ is of type $(3,2)^{sm}$ (see Rem.~\ref{wisn}). Set
$E_3:=\Lo(R_3)$.

As the extremal rays $R_0,R_1,R_2,R_3$ are distinct, by Lemma~\ref{vari}$(a)$ we know that also the divisors
$E_0, E_1,E_2,E_3$ are distinct, and that $E_3\cdot R_1\geq 0$ and $E_3\cdot R_2\geq 0$.

Suppose that
 $E_1\cdot R_3>0$ and $E_3\cdot R_1>0$, and let $f_3\subset X$ be a curve with class in $R_3$. Using the projection formula one easily sees that $\sigma(E_3)\cdot \sigma_*(f_3)\geq 0$. On the other hand $[\sigma_*(f_3)]\in R'$, and $R'$ is divisorial with locus $\sigma(E_3)$, so we have a contradiction.

We conclude by Lemma~\ref{vari}$(b)$ that
$E_1\cdot R_3=E_3\cdot R_1=0$. Moreover, since we are assuming that the intersections $E_0\cdot
R_3$ and $E_1\cdot R_3$ are not both zero, we must have
$E_0\cdot R_3>0$. 

If $E_1\cap E_3\neq\emptyset$, then $E_1$ should contain some curve with
class in $R_3$, which is impossible because the contraction of $R_3$
is finite on $E_1$. Thus $E_1\cap E_3=\emptyset$.

Similarly, applying Lemma \ref{giusto} with $R=R_0$, $R'=R_2$, and
$S=R_3$ (again, the assumptions are satisfied by step \ref{s1}), we conclude that the contraction of $R_3$ is finite on $E_2$
too, that $E_2\cdot R_3=E_3\cdot R_2=0$, and finally that $E_2\cap
E_3=\emptyset$.

Therefore $E_1,E_2,E_3$ are pairwise disjoint $\pr^1$-bundles, with
fibers $f_i\subset E_i$, such that $E_i\cdot f_i=-1$ and $E_0\cdot
f_i>0$, for $i=1,2,3$. Now \cite[Lemma 3.1.7]{codim} yields that $\N(E_0\cap E_1,X)\subseteq
E_i^{\perp}$ for $i=1,2,3$.

Applying Lemma \ref{due} to the divisor $E_1$ and the extremal ray
$R_0$, we find
 a prime divisor $E_4$ such that $E_4\neq E_1$, $E_0\cap E_4=\emptyset$, and
$E_1\cap E_4\neq\emptyset$. Since $E_2$ and $E_3$ are disjoint from
 $E_1$, we also have $E_4\neq E_2$ and $E_4\neq E_3$, so that $E_4\cdot
 f_i\geq 0$ for $i=1,2,3$. 

For every curve $C\subset E_0\cap E_1$ we have $C\cap E_4=\emptyset$,
therefore $E_4\cdot C=0$; this gives
 $\N(E_0\cap E_1,X)\subseteq E_4^{\perp}$.

Summing-up, we have  $\N(E_0\cap E_1,X)\subseteq E_1^{\perp}\cap E_2^{\perp}\cap E_3^{\perp}\cap E_4^{\perp}$
Since $\codim\N(E_0\cap E_1,X)=3$ by step \ref{s1}$(f)$, we deduce that $[E_1],[E_2],
[E_3],[E_4]$ are linearly dependent in $\Nu(X)$, thus there exist
rational numbers $a_i$, not all zero, such that $\sum_{i=1}^4a_iE_i\equiv 0$.
Fix $j\in\{1,2,3\}$. Intersecting with $f_j$ we get $a_j=a_4E_4\cdot
f_j$. In particular $a_4\neq 0$, and we get:
$$(E_4\cdot f_1)E_1+(E_4\cdot f_2)E_2+(E_4\cdot f_3)E_3+E_4\equiv 0.$$
Since a non-zero effective divisor cannot be numerically trivial, we
have a contradiction.
\begin{step}
The case where $\ph$ is of fiber type.
\end{step} 
\noindent Suppose first that $Y$ is a surface. 

Recall that
$\sigma(E_1)$ and
$\sigma(E_2)$ are the two surfaces blown-up by $\sigma$. Fix $i\in\{1,2\}$.
By \eqref{ea} we have
  $\dim\N(\ph(\sigma(E_i)),Y)\geq\dim\N(E_i,X)-3=\rho_X-5\geq 2$.
 Therefore $\ph(\sigma(E_i))$ cannot be a point nor a curve, and
we conclude that $\sigma(E_i)$ dominates $Y$ under $\ph$.

Consider now a general fiber $F\subset X_2$ of $\ph$. Then $\dim\N(F,X_2)=1$,
and $F$ intersects both
$\sigma(E_1)$ and
$\sigma(E_2)$. The inverse image  $\sigma^{-1}(F)$ is a general fiber of the composition $\ph\circ\sigma\colon X\to Y$,
and it
contains curves
with class in $R_1$ and in $R_2$, so that 
$\dim\N(\sigma^{-1}(F),X)=3=\dim\ker(\ph\circ\sigma)_*$. 
This means that $\ph\circ\sigma$ is a ``quasi-elementary'' contraction (see
\cite[Def.~3.1]{fanos}), 
 and by \cite[Th.~1.1]{fanos} $X$
is a product of surfaces, a contradiction.

\bigskip

Suppose now that $\dim Y=3$, so that $Y$ is factorial with isolated
canonical singularities (this is well-known -- see for instance \cite[Lemma 3.10(i)]{fanos} and references therein). Since $\sigma(E_0)\cdot R'>0$, the divisor $\sigma(E_0)$ intersects every fiber of $\ph$, 
and we have $(\ph\circ\sigma)(E_0)=Y$. Hence 
 $\ph\circ\sigma$ is generically finite on $E_0$, and cannot contract
the fibers $f_0$ of the $\pr^1$-bundle structure on $E_0$ given by the contraction of $R_0$. The images
$(\ph\circ\sigma)(f_0)$ give a 
covering and unsplit family of rational curves in $Y$ (see \cite{unsplit} for the terminology), and by
\cite[Cor.~1]{unsplit} there exists an elementary contraction 
$\psi\colon Y\to Z$ which contracts the curves $(\ph\circ\sigma)(f_0)$. In
particular $\psi$ is of fiber type, and $\rho_Z=\rho_Y-1\geq 3$, hence
$Z$ is a surface. 
$$X\stackrel{\sigma}{\longrightarrow} X_2\stackrel{\ph}{\longrightarrow} Y
\stackrel{\psi}{\longrightarrow} Z$$

Let $F\subset X_2$ be a general fiber of $\psi\circ\ph\colon X_2\to
Z$. Then $\dim\N(F,X_2)=2$, and $F$ contains some curve of the type
$\sigma(f_0)\subset\sigma(E_0)$. Notice that the surfaces $\sigma(E_1)$ and
$\sigma(E_2)$ 
both intersect $\sigma(f_0)$, 
because $E_1\cdot R_0>0$ and $E_2\cdot
R_0>0$ by step \ref{s1}$(d)$.

The inverse image $\sigma^{-1}(F)$ is a general fiber of the
 composition $\xi:=\psi\circ\ph\circ\sigma\colon X\to
Z$. Since $\sigma(E_1)$ and
$\sigma(E_2)$  both intersect $F$,  $\sigma^{-1}(F)$ contains curves
with class in $R_1$ and in $R_2$, so that 
$\N(\sigma^{-1}(F),X)\supseteq \ker\sigma_*$. As $\sigma_*(\N(\sigma^{-1}(F),X))=\N(F,X_2)$,
we get
$\dim\N(\sigma^{-1}(F),X)=2+\dim\N(F,X_2)=4$.
On the other hand
$\dim\ker\xi_*=4$, and as in the previous case,
this means that $\xi$ is a ``quasi-elementary'' contraction. 
 Therefore by \cite[Th.~1.1]{fanos} $X$
is a product of surfaces, which is again a contradiction.
This concludes the proof of Th.~\ref{main}.
\qed
\begin{remark}\label{explicit} 
 Let $X$ be a Fano manifold of dimension $n$, and assume that $X$ is embedded in a projective space with degree $d$. Then it follows from \cite{zakbounds} that
\stepcounter{thm}
\begin{equation}\label{zak}
\rho_X<\left(n^2+n+2\right)d.\end{equation}
Indeed let us consider the three {classes} $\mu_0$, $\mu_1$, and $\mu_2$ of $X$. These are projective invariants of an embedded variety (see \cite{zakbounds} for the definition), in particular $\mu_0=d$ and $\mu_1=2d+2g-2$, where $g$ is the sectional genus of $X$. Using the fact that $X$ is Fano,
one can easily check that $2g-2<(n-1)d$ and hence $\mu_1<(n+1)d$.
Now it follows from \cite[Cor.~1.13 and Th.~2.9(ii)]{zakbounds} that  
$$\rho_X\leq \mu_2+\mu_0\leq \frac{\mu_1^2}{d}+d<\left(n^2+2n+2\right)d.$$

On the other hand there are explicit constants $\delta_n$ and $m_n$, depending only on $n$, such that
$-m_nK_X$ is very ample, and $(-K_X)^n\leq \delta_n$ (see \cite[\S 5.9]{debarreUT} and references therein). Therefore $X$ can always be embedded  with degree at most $m_n^n\delta_n$, and
\eqref{zak} yields $\rho_X<(n^2+2n+2)m_n^n\delta_n$. 

In dimension $4$ we can take $\delta_4=3^85^{304}$ \cite[Th.~5.18]{debarreUT}. 
Moreover 
 $-4K_X$ is free \cite{kawfreeness}, hence $-23K_X$ is very ample, see \cite[Ex.~1.8.23]{lazI}. This yields 
$$\rho_X<3^85^{304}23^426$$
for any Fano $4$-fold $X$.
 Notice that even if we take $m_4=5$ (as predicted by Fujita's conjecture) and $\delta_4=800$ (which, to the author's knowledge, is the maximal anticanonical degree among the known examples of Fano $4$-folds), we still get a rather large bound.
 \end{remark}

\footnotesize
\providecommand{\bysame}{\leavevmode\hbox to3em{\hrulefill}\thinspace}
\providecommand{\MR}{\relax\ifhmode\unskip\space\fi MR }
\providecommand{\MRhref}[2]{%
  \href{http://www.ams.org/mathscinet-getitem?mr=#1}{#2}
}
\providecommand{\href}[2]{#2}

\bigskip

\bigskip

\noindent C.\ Casagrande\\
 Dipartimento di Matematica, Universit\`a di Torino \\
via Carlo Alberto, 10 \\
 10123 Torino - Italy \\
cinzia.casagrande@unito.it
\end{document}